\newtheorem{thm}{Theorem}
\newtheorem{lemma}{Lemma}
\theoremstyle{definition}
\newtheorem{rem}{Remark}[section]
\newcommand{\eps}{\varepsilon}
\def\R{\mathbb R}
\def\N{\mathbb N}
\def\rife#1{(\ref{#1})}
\newcommand{\be}{\begin{equation} \label}
\newcommand{\ee}{\end{equation}}
\newcommand{\ts}{\textstyle}
\begin{document}
\title[] 
{Classification of solutions of an \\ elliptic Hamilton-Jacobi equation}


\author[Porretta]{Alessio Porretta}%
\address{University of Rome  Tor Vergata,
Dipartimento di Matematica,
Via della Ricerca Scientifica 1,
00133 Roma, Italia}
\email{porretta@mat.uniroma2.it}

\author[Souplet]{Philippe Souplet}%
\address{Universit\'e Sorbonne Paris Nord,
CNRS UMR 7539, Laboratoire Analyse, G\'{e}om\'{e}trie et Applications,
93430 Villetaneuse, France}
\email{souplet@math.univ-paris13.fr}

\begin{abstract}
 We show that any classical solution of the diffusive Hamilton-Jacobi (DHJ) equation $-\Delta u= |\nabla u|^p$
in a half-space with zero boundary conditions for $1<p\le 2$ is necessarily one-dimensional.
This improves the previously known result, which required an extra assumption of boundedness from above.
Combined with the existing analogous result for $p>2$, our result completes the full classification picture
of the Dirichlet problem for equation (DHJ) in a half-space.
\end{abstract}

\maketitle



\section{Introduction and main result}

In this paper, we are concerned with the classification of solutions of the elliptic Hamilton-Jacobi equation 
\be{ellHJ}
-\Delta u= |\nabla u|^p
\ee
in the half-space.

 Liouville type nonexistence, classification and symmetry (or rigidity) theorems are a central issue 
in the study of nonlinear elliptic and parabolic problems, as well as in other nonlinear PDE's.
Beside their intrinsic interest, they have, in conjunction with rescaling methods, many applications 
for the description of the qualitative behavior of solutions (a priori estimates, space and/or time decay, blow-up 
asymptotics, etc.).
The most studied cases of spatial domains are the whole space $D=\R^n$, the half-space 
$D=\R^n_+:=\{x\in\R^n;\, x_n>0\}$ and cones. The first two cases are especially meaningful 
for applications since they arise as limiting domains when studying a priori estimates via rescaling methods
(see, e.g., \cite{GS1,PQS} and \cite{QSb19} and the references therein).

Many Liouville type nonexistence, classification and symmetry results 
are available for elliptic equations.
In order to place our results in perspective,
let us review the relevant known facts.
In all the paper, by a solution,  unless stated otherwise, we always mean 
a classical solution,
namely $u\in C^2(\R^n)$ in the whole space case, and $u\in C^2(\R^n_+)\cap C(\overline{\R^n_+})$ in the half-space case. 

Let us first consider the well-known Emden equation ($p>1$)  {in the whole space:}
\be{ellclass}
-\Delta u= u^p,\quad x\in\R^n\qquad  {(u>0).}
\ee

	\begin{itemize}
\item[$\bullet$] The classical result of \cite{GS1} asserts that equation \eqref{ellclass} has no solution
if (and only if) 
$p<p_S:=(n+2)/(n-2)_+$ \cite{GS1}.

\vskip 1.5pt

\item[$\bullet$]  {For the critical case} $p=p_S$,  it was shown in \cite{CGS} that, 
up to dilations and translations, the only 
solution of \eqref{ellclass} is the Aubin-Talenti bubble $u(x)=c(n)(1+|x|^2)^{2-n\over 2}$ .

	\end{itemize}
	
\vskip 1.5pt

Next consider the corresponding half-space problem:
\be{ellclass2}
\begin{cases}
-\Delta u = u^p,  & \hbox{$x\in \R^n_+$}\qquad (u>0),
\\
u=0, & \hbox{$x\in \partial \R^n_+$.}
\end{cases}
\ee

\begin{itemize}
\item[$\bullet$] It was first shown in \cite{GS2} that \eqref{ellclass2} has no 
solution whenever $p\le p_S$.
A lot of effort was then devoted to removing the restriction~$p\le p_S$:

\vskip 1.5pt

\item[$\bullet$] For {\it bounded} solutions, the nonexistence of 
solutions of \eqref{ellclass2} was proved in \cite{CLZ} {\it for all $p>1$}, 
improving on earlier partial results from \cite{Da,Fa}.
The boundedness assumption was next weakened in  \cite{DSS} where it was shown
that, for all $p>1$, problem \eqref{ellclass2} has no {\it monotone} solutions (i.e., solutions that are increasing in the $x_n$-direction),
and it is known that monotonicity is true in particular for any solution that is bounded on finite strips.
The result was then extended in \cite{DFP} to the larger class of solutions that are stable outside a compact set.
However, for $p>p_S$, the validity of the full nonexistence property (for all positive classical solutions) still remains an open problem.	
\end{itemize}

\smallskip

  Now passing to our main topic, namely the elliptic Hamilton-Jacobi equation \eqref{ellHJ}, with $p>1$,
  the following is known:

	\begin{itemize}
\item[$\bullet$] If $u$ is a classical solution of \eqref{ellHJ} in $\R^n$, 
then $u$ is constant \cite{Lions85}.
	\end{itemize}
	
\noindent 	For the half-space problem:
\be{pb}
\begin{cases}
-\Delta u { \strut=\strut}  |\nabla u|^p,  & \hbox{$x\in \R^n_+$,}
\\
u=0, & \hbox{$x\in \partial \R^n_+$}
\end{cases}
\ee
we have:

	\begin{itemize}
	
\item[$\bullet$] If $p>2$ and $u$ is a classical solution of \eqref{pb},
then $u$ depends only on the variable $x_n$ (see \cite[Theorem~1.1]{FPS2020}).
Applications of this classification result to the description of gradient blow-up asymptotics for the corresponding
 initial-Dirichlet parabolic problem were also developed in \cite{FPS2020}.

\vskip 1.5pt

\item[$\bullet$] If $p\in(1,2]$, $u$ is a classical solution of \eqref{pb} and $u$ is bounded from above,
then $u$ depends only on the variable $x_n$ (see \cite[Theorem~4.1]{PV}).
However, the following question was left open:
\be{OQ}
\begin{aligned}
&\hbox{Does the one-dimensionality property for \eqref{pb} with $p\in(1,2]$}\\
&\hbox{remain true without boundedness assumption~?}
\end{aligned}
\ee

	\end{itemize}

\begin{rem}
Beside equations and \eqref{ellHJ} and \eqref{ellclass}, the mixed equations
$-\Delta u= u^q|\nabla u|^p$ and $-\Delta u=u^q+\mu |\nabla u|^p$ ($p,q>1$, $\mu\ne 0$)
have also been studied and a number of
Liouville-type results (depending on the various parameters $p,q,n,\mu$) can be found in,
e.g.,~\cite{BZZ,BV21,BGV,BGV25,FPS2020b,PV} and the references therein.
 {See also \cite{BQT25, BGQ, CHZ,CG} for results on more general, related equations.}
\end{rem}

The main goal of this paper is to solve the open question \eqref{OQ}.
Namely, we will answer it positively and hence complete the full classification picture for all~$p>1$.

\medskip

Our main result is the following theorem.

\begin{thm} \label{thmclassif}
Let  $1<p{\strut\le\strut} 2$ and let $u\in C^2(\R^n_+)\cap C(\overline{\R^n_+})$ be a classical solution of \rife{pb}. Then $u$ depends only on $x_n$.
\end{thm}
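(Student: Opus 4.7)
The main task is to remove the upper-boundedness hypothesis from \cite[Theorem 4.1]{PV}. I would split the argument according to the value of $p$, since the case $p=2$ admits an exact linearization while $1<p<2$ does not.

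For the critical case $p=2$, the plan is to apply the Hopf--Cole transformation. Setting $v=e^u$, a direct computation gives
\[
\Delta v = e^u\bigl(\Delta u+|\nabla u|^2\bigr)=0,
\]
so $v$ is a positive harmonic function on $\R^n_+$, continuous up to the boundary, with $v\equiv 1$ on $\partial\R^n_+$. By the classical representation of nonnegative harmonic functions on a half-space (every such function equals $cx_n+P[\mu]$ for some $c\ge 0$ and nonnegative boundary measure $\mu$), continuity of $v$ at $\partial\R^n_+$ with trace $1$ forces $d\mu=dy'$ and hence $P[\mu]\equiv 1$, giving $v(x)=1+cx_n$. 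Thus $u(x)=\log(1+cx_n)$ depends only on $x_n$, closing the case $p=2$ without any use of \cite{PV}.

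For $1<p<2$ no such linearization exists, and the plan is instead to prove universally that $\sup_{\R^n_+}u<\infty$ and then invoke \cite[Theorem 4.1]{PV}. First I would show $u\ge 0$: since $u$ is superharmonic and vanishes on $\partial\R^n_+$, a Phragm\'en--Lindel\"of argument on expanding half-balls (comparison with $\eps x_n$) rules out negative values. Next I would establish a Bernstein-type universal gradient estimate
\[
|\nabla u(x)|\le C(n,p)\,x_n^{-1/(p-1)},\qquad x\in\R^n_+,
\]
by running the standard Bernstein argument on $w=|\nabla u|^2$ with a cutoff on balls $B_{x_n}(x)\subset\R^n_+$, using that $|D^2u|^2\ge (\Delta u)^2/n=w^p/n$; the super-linearity of $|\nabla u|^p$ ensures this bound is independent of $\|u\|_\infty$. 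Near the boundary I would control $u$ by comparison with the explicit 1D barriers $\phi_a(x_n)$ solving $-\phi''=|\phi'|^p$, $\phi(0)=0$, $\phi'(0)=a$, which are all bounded when $p<2$. Combining the near-boundary 1D control with the interior gradient decay, and using that $s^{-1/(p-1)}$ is integrable at $s=\infty$ precisely when $p<2$, should yield the universal upper bound on $u$, after which \cite{PV} concludes.

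The principal obstacle is the bridging step in the subquadratic case: since $\phi_a(\infty)$ grows unboundedly as $a\to\infty$, no single 1D barrier dominates every solution from above, so the argument must genuinely exploit the multidimensional gradient decay. Uniformly matching the near-boundary 1D comparison with the interior Bernstein decay, so as to control $\int_0^\infty \partial_n u(x',s)\,ds$ uniformly in $x'\in\R^{n-1}$, is the technical core of the proof I envision.
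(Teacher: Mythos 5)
Your $p=2$ argument is essentially the paper's: Hopf--Cole reduces to the Herglotz representation of positive harmonic functions in a half-space. Your variant ($v=e^u>0$ directly, arguing $\mu$ must be Lebesgue measure) avoids the paper's preliminary barrier step (which gives $e^u-1\ge -kx_n$ near the boundary, so that $e^u-1+(k+1)x_n$ is a positive harmonic function \emph{vanishing} on $\partial\R^n_+$, where the representation degenerates to $cx_n$). Your version is fine modulo carefully justifying that boundary continuity with trace $1$ rules out any singular part of $\mu$; the paper's normalization sidesteps that point.

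For $1<p<2$ the proposal has a genuine gap, and indeed you flag it yourself. Two issues. First, the preliminary claim $u\ge 0$ via Phragm\'en--Lindel\"of on expanding half-balls cannot be run a priori: the only a priori information near the corner $\{|x|=R,\ x_n\to 0\}$ is a bound of the type $|u|\lesssim 1+x_n^{1-\beta}$ (from Bernstein, $\beta=1/(p-1)>1$), which blows up as $x_n\to 0$ and is not dominated by the comparison function $\eps x_n$. Nonnegativity \emph{does} follow once one knows $u$ is globally bounded, but that is circular in the order you propose. Second, and more importantly, the ``technical core'' you describe at the end --- bridging the 1D barrier control near the boundary with the interior Bernstein decay to get a universal bound on $\sup u$ --- is precisely the step the paper resolves with a new mechanism that your sketch does not contain: an oscillation-shrinking rescaling argument. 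One assumes $\sup|u|=\infty$, selects points $x^k$ via a doubling lemma applied to $M(x)=|u(x)|^{1/(\beta-1)}$, rescales $v_k(y)=\lambda_k^{\beta-1}u(x^k+\lambda_k y)$ with $\lambda_k=1/M(x^k)$, and uses the barrier estimate to show the rescaled boundary stays at positive distance $L>0$; the limit $v$ then has $|v|\le 1$, $|v(0)|=1$ with $0$ interior, contradicting the strong maximum principle together with the zero Dirichlet data on $\partial D_\infty$. Without this (or some replacement) there is no route from ``each 1D barrier $\phi_a$ is bounded'' to ``$u$ is bounded'', exactly because, as you note, $\phi_a(\infty)\to\infty$ as $a\to\infty$. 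The subsequent reduction to \cite[Theorem~4.1]{PV} (or to a sliding/translation-compactness argument as in \cite{FPS2020}) is then straightforward, but the boundedness lemma is the crux, and it is missing.
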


 As a consequence, all solutions of problem \rife{pb} can be explicitly obtained by a simple ODE integration.
We stress that no sign condition or growth assumption at infinity 
 is assumed a priori on $u$.

The proof of the corresponding result of \cite{PV} crucially required the assumption that $u$ be bounded from above.
In fact the result is stated there for nonnegative solutions of the equation $-\Delta v+|\nabla v|^p=0$ in $\R^n_+$
with  {constant boundary conditions $v=C$.\footnote{which is equivalent to \rife{pb} with $M:=\sup_{\R^n_+} u<\infty$, 
through the transformations $u:=C-v$ and $v:=M-u$}}
As for the superquadratic case $p>2$, solved in \cite{FPS2020} without boundedness assumption via a moving planes
and translation-compactness argument, 
it is in a sense easier because the Bernstein estimate from \cite{Lions85} (see Lemma \ref{bernstein} below)
implies the crucial property that $u(x',x_n)\to 0$ as $x_n\to 0$ {\it uniformly} with respect to $x'\in\R^{n-1}$.
This property no longer follows from the Bernstein estimate when $p\in(1,2]$.

The key new idea is to show the global boundedness of $u$ (Lemma~\ref{barrier2}) by a rescaling procedure.
The latter is combined with an oscillation shrinking argument
 (using Lemma~\ref{lemDoubling}, which is a variant of the doubling lemma from \cite{PQS}) 
 and a boundary barrier estimate (Lemma~\ref{barrier}).
This forces the rescaled solution to achieve an interior maximum,
leading to a contradiction with the strong maximum principle.
 Once the boundedness is established, we can either apply \cite[Theorem~4.1]{PV} or conclude via a moving planes
 and translation-compactness argument similar to that in \cite{FPS2020}.

	 \section{Proof of Theorem~\ref{thmclassif}}

\medskip

Throughout this paper we denote
$$\beta=\frac{1}{p-1}.$$
Let us first recall the classical gradient estimate from \cite{Lions85}, 
obtained there by Bernstein's method,  which, in the special case of \rife{pb}, can be stated as follows:

\begin{lemma}\label{bernstein}
Under the assumptions of Theorem~\ref{thmclassif}, there exists  a universal constant $K$, only depending on $p,n$, such that
\be{bernsteinK}
|\nabla u(x)| \leq Kx_n^{-\beta},\quad x\in \R^n_+\,.
\ee
\end{lemma}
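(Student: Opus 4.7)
I would use the classical Bernstein method, adapted to the natural self-similar scaling of \rife{ellHJ}. Set $w=|\nabla u|^2$ and differentiate the equation: using $\Delta u=-w^{p/2}$ one obtains the Bochner-type inequality
\begin{equation*}
\Delta w + p\,|\nabla u|^{p-2}\,\nabla u\cdot\nabla w \,=\, 2|D^2 u|^2 \,\ge\, \tfrac{2}{n}(\Delta u)^2 \,=\, \tfrac{2}{n}\,w^p,
\end{equation*}
which drives the whole argument. The drift $|\nabla u|^{p-2}\nabla u$ has norm $|\nabla u|^{p-1}$ and so is bounded near zeros of $\nabla u$, hence causes no regularity issue.

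Fix $x_0=(x_0',x_n)\in\R^n_+$, set $\rho=x_n/2$ so that $B_\rho(x_0)\Subset\R^n_+$, and introduce the cutoff $\eta:=\psi^{2\beta}$ with $\psi(x)=(\rho^2-|x-x_0|^2)_+$. Since $p\le 2$ gives $2\beta\ge 2$, $\eta\in C^2$ and vanishes on $\partial B_\rho(x_0)$. The nonnegative function $V:=\eta w$ then attains its maximum at some $\bar x\in B_\rho(x_0)$. If $w(\bar x)=0$, then $V\equiv 0$ and $w(x_0)=0$ trivially; so we may assume $w(\bar x)>0$, in which case $w$ is smooth near $\bar x$ by elliptic regularity for \rife{ellHJ}.

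At $\bar x$, $\nabla V=0$ gives $\nabla w=-(w/\eta)\nabla\eta$, and $\Delta V\le 0$ combined with the Bochner inequality yields
\begin{equation*}
\tfrac{2}{n}\,\eta\,w^p(\bar x)\,\le\,p\,w^{(p+1)/2}|\nabla\eta| + 2w|\nabla\eta|^2/\eta + w|\Delta\eta|.
\end{equation*}
Direct computations give $|\nabla\eta|\le C\psi^{2\beta-1}\rho$, $|\nabla\eta|^2/\eta\le C\psi^{2\beta-2}\rho^2$, and $|\Delta\eta|\le C(\psi^{2\beta-1}+\psi^{2\beta-2}\rho^2)$. Using the crucial identity $2\beta(p-1)=2$ to homogenize every $\psi$-power, and writing $M:=V(\bar x)$ together with $\psi\le\rho^2$, the previous display reduces to
\begin{equation*}
M^{p-1}\,\le\,C_1\,M^{(p-1)/2}\rho + C_2\,\rho^2.
\end{equation*}
Young's inequality absorbs the first term into the left-hand side, yielding $M\le C\rho^{2\beta}$. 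Since $\eta(x_0)=\rho^{4\beta}$, this gives $w(x_0)\le M\rho^{-4\beta}\le C\rho^{-2\beta}$, i.e.\ $|\nabla u(x_0)|\le K\,x_n^{-\beta}$.

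The delicate point is the choice of the cutoff exponent: with a generic $\alpha$ in $\eta=\psi^\alpha$, the three $\psi$-powers produced by the substitution would not all collapse to the same exponent, and one would be left with an estimate depending on the unknown value $\psi(\bar x)$. The exponent $2\beta$ is the unique one compatible with the self-similar invariance of the equation, and it is precisely this matching between cutoff and equation that lets the Bernstein argument close off cleanly in terms of $\rho$ alone.
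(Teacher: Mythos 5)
Your proposal is correct and follows the classical Bernstein method, which is precisely how the estimate is obtained in the cited reference \cite{Lions85}; the paper itself does not write out a proof of Lemma~\ref{bernstein}, stating only that it follows from \cite{Lions85} by Bernstein's method, so your argument fills in the standard details rather than offering a different route. The computation is sound: the Bochner identity combined with $\Delta u=-w^{p/2}$ gives the drift--diffusion inequality, the cutoff $\eta=\psi^{2\beta}$ with $\rho=x_n/2$ is the scale-invariant choice, and the maximum-point substitution $\nabla w=-(w/\eta)\nabla\eta$ together with $2\beta(p-1)=2$ and $\psi\le\rho^2$ collapses the inequality to $M^{p-1}\le C M^{(p-1)/2}\rho+C\rho^2$, which Young's inequality closes. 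You correctly handle the only regularity point that could cause trouble: the Bochner computation is only invoked at a maximum point $\bar x$ where $w(\bar x)>0$, and there $\nabla u\ne 0$, so elliptic bootstrap makes $u$ smooth locally and $V=\eta w$ is genuinely $C^2$ near $\bar x$. (A minor cosmetic remark: for $p=2$ the cutoff $\psi^{2}$ is not quite $C^2$ across $\partial B_\rho$, but this is irrelevant since the maximum is interior; and the $w|\Delta\eta|$ term produces a $\psi^{-1}$ rather than a $\psi^{-2}$ power, which you correctly absorb using $\psi\le\rho^2$, so the claim that all powers ``collapse to the same exponent'' is a slight overstatement, though the argument is unaffected.)
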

\qed

{For $1<p<2$,} as a consequence of the gradient estimate, we next observe that $u$ is bounded away from $x_n=0$ and admits a limit as $x_n\to \infty$.

\begin{lemma}\label{bernstein2} 
Under the assumptions of Theorem~\ref{thmclassif} {with $1<p<2$}, there exists $\ell\in \R$ such that 
\be{ell}
\lim_{x_n\to \infty} u(x',x_n) = \ell \qquad \hbox{uniformly for $x'\in \R^{n-1}$,}
\ee
and
\be{estu}
|u(x)| \leq |\ell| + C(n,p)x_n^{1-\beta}, \quad x\in \R^n_+\,.
\ee
\end{lemma}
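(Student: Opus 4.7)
The plan is to exploit that $\beta = 1/(p-1) > 1$ when $1 < p < 2$, so that the Bernstein bound $|\nabla u| \le K x_n^{-\beta}$ from Lemma~\ref{bernstein} becomes integrable near $x_n = \infty$. This is the key distinction from the borderline case $p=2$, where $\beta = 1$ and the corresponding integral diverges logarithmically, explaining why the conclusion is stated only for $p < 2$.

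First I would show that, for each fixed $x' \in \R^{n-1}$, the function $t \mapsto u(x',t)$ admits a finite limit as $t \to \infty$. Indeed, for $0 < s < t$, integration of the Bernstein estimate along the $x_n$-direction gives
\[
|u(x',t) - u(x',s)| \le \int_s^t |\partial_{x_n} u(x',\tau)|\, d\tau \le K\int_s^t \tau^{-\beta}\, d\tau = \frac{K}{\beta-1}\bigl(s^{1-\beta} - t^{1-\beta}\bigr),
\]
and since $\beta > 1$ the right-hand side tends to $0$ as $s,t \to \infty$. The Cauchy criterion thus produces a limit, call it $\ell(x')$.

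Next, to check that this limit does not depend on $x'$, I would estimate, for arbitrary $x',y' \in \R^{n-1}$ and $t > 0$,
\[
|u(x',t) - u(y',t)| \le K t^{-\beta}\,|x'-y'|,
\]
by integrating $\nabla_{x'} u$ along the segment joining $(x',t)$ to $(y',t)$ and applying \rife{bernsteinK}. Letting $t \to \infty$ and using the previous step gives $\ell(x') = \ell(y')$, so $\ell$ is a (real) constant.

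Finally, for the uniform convergence \rife{ell} together with the pointwise bound \rife{estu}, I would let $t \to \infty$ in the first displayed inequality with $s = x_n$, which yields
\[
|u(x',x_n) - \ell| \le \frac{K}{\beta-1}\, x_n^{1-\beta}, \qquad x \in \R^n_+.
\]
This immediately gives \rife{ell} (uniformly in $x'$, since $x_n^{1-\beta} \to 0$ as $x_n \to \infty$) and, by the triangle inequality, \rife{estu} with $C(n,p) = K/(\beta-1)$. There is no serious obstacle: the whole argument is a routine integration of the Bernstein bound, and the fact that the right-hand side of \rife{estu} blows up as $x_n \to 0^+$ is consistent with the estimate being informative only away from the boundary.
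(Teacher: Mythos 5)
Your argument is correct and is essentially the same as the paper's: integrate the Bernstein bound $|\nabla u|\le Kx_n^{-\beta}$ in the $x_n$-direction, use $\beta>1$ to get a Cauchy tail, then pass to the limit to extract $\ell$ and the quantitative bound, with the independence of $\ell$ from $x'$ coming from the decay of $\nabla u$. Your write-up simply makes explicit the intermediate inequalities (the horizontal integration of $\nabla_{x'}u$ and the $t\to\infty$ limit with $s=x_n$) that the paper treats as immediate.
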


\begin{proof} 
Since, due to Lemma \ref{bernstein},
$$
|u(x',t)-u(x',s) |= \Big|\int_s^t \frac{\partial u}{\partial x_n}(x',r)dr\Big| \leq K \int_s^t r^{-\beta}dr\,,
$$
and since $\beta>1$ owing to $1<p<2$, it follows that $u(x',x_n)$ admits a finite limit as $x_n\to \infty$. Moreover, since $|\nabla u|\leq K x_n^{-\beta}\to 0$ as $x_n\to \infty$, this limit is independent of $x'$.
This implies \rife{ell} and, by integration, \rife{estu} holds as well.
\end{proof}

We next state a local boundary estimate in terms of a local bound on $u$.
For $a=(a',0)\in\partial\R^n_+$ we denote
$$D_a=\{x\in \R^n_+;\ |x-a|<2\},\qquad
\Sigma_a=\{x\in \R^n;\ |x-a|<2,\ x_n=0\}.$$

\begin{lemma}\label{barrier}
Let $1<p{\strut\le\strut} 2$, $M>0$ and $a=(a',0)\in\partial\R^n_+$.
Let $u\in C^2(D_a)\cap C(\overline D_a)$  be a solution of $-\Delta u {\strut\le\strut}  |\nabla u|^p$ in $D_a$ 
with $u=0$ on $\Sigma_a$, and
such that ${ u}\le M$ in $\overline D_a$.
There exists $M_1>0$ depending only on $M,n,p$ such that
$${u}(a',x_n)\le M_1 x_n,\quad 0<x_n\le 1.$$
\end{lemma}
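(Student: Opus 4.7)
The plan is to pass to the coercive Hamilton--Jacobi equation for $v:=M-u$ and then compare $v$ with an explicit boundary subsolution. Since $u\le M$ and $-\Delta u\le|\nabla u|^p$, the function $v=M-u$ is nonnegative on $\overline D_a$, equals $M$ on $\Sigma_a$, and satisfies $-\Delta v+|\nabla v|^p\ge 0$ in $D_a$; in other words $v$ is a classical supersolution of the coercive operator $L[w]:=-\Delta w+|\nabla w|^p$. Proving $u(a',x_n)\le M_1 x_n$ is thus equivalent to showing $v(a',x_n)\ge M-M_1 x_n$, and I would construct a subsolution $\Psi$ of $L[w]=0$ in a thin sub-cylinder $Q=\{|x'-a'|<R\}\times(0,h)\subset D_a$ such that $\Psi\le v$ on $\partial Q$ and $\Psi(a',x_n)\ge M-M_1 x_n$ on the axis; the comparison principle for $L$ would then force $\Psi\le v$ in $Q$ and yield the claim.

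For $1<p<2$ I would try the explicit polynomial
\[
\Psi(x):=M+Ax_n^2-M_1 x_n-\frac{M}{R^2}|x'-a'|^2,
\]
tuning the four positive constants $A,M_1,R,h$ (depending only on $M,n,p$) so that: (i) $\Psi\le M=v$ on the bottom is automatic from the $|x'-a'|$ term; (ii) on the lateral face $\{|x'-a'|=R\}$ one has $\Psi=Ax_n^2-M_1 x_n\le 0\le v$ as soon as $Ah\le M_1$; (iii) on the top $\{x_n=h\}$ one has $\Psi\le M+Ah^2-M_1 h\le 0\le v$ provided $M_1 h\ge Ah^2+M$; and (iv) $\Delta\Psi=2A-2(n-1)M/R^2\ge|\nabla\Psi|^p$ holds, where $|\nabla\Psi|^2\le 4M^2/R^2+M_1^2$. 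Saturating (iii) at $h=\sqrt{M/A}$ gives $M_1=2\sqrt{MA}$, so (iv) reduces to
\[
\frac{M_1^2}{2M}-\frac{2(n-1)M}{R^2}\ge\Bigl(\frac{4M^2}{R^2}+M_1^2\Bigr)^{p/2},
\]
which can be solved by choosing $R$ of order $M^{(1-p)/(2-p)}$ (capped at $1$) and $M_1$ of order $M^{1/(2-p)}$, both with constants depending only on $n,p$. Comparison then gives $\Psi\le v$ in $Q$, and reading the inequality along the axis gives $u(a',x_n)\le M_1 x_n$ for $x_n\in[0,h]$.

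The polynomial ansatz degenerates at $p=2$ once $M$ is not small, because the leading inequality becomes $M_1^2\bigl(1/(2M)-1\bigr)\ge(\text{positive})$ and has no solution for $M\ge 1/2$. In that borderline case I would instead use the Hopf--Cole transform: setting $\tilde u:=e^u$ and using $-\Delta u\le|\nabla u|^2$, one computes $\Delta\tilde u=e^u(|\nabla u|^2+\Delta u)\ge 0$, so $\tilde u-1$ is a nonnegative subharmonic function in $D_a$ that vanishes on $\Sigma_a$ and is bounded by $e^M-1$. Comparing $\tilde u-1$ with the harmonic function on the half-ball $\{|x-a|<1\}\cap\R^n_+$ having boundary data $0$ on the flat face and $e^M-1$ on the spherical face, the classical Hopf-type boundary estimate for harmonic functions yields $\tilde u(a',x_n)-1\le C_n(e^M-1)x_n$ for $x_n\in(0,1)$, whence $u(a',x_n)\le\log\bigl(1+C_n(e^M-1)x_n\bigr)\le C_n(e^M-1)x_n$, giving the lemma with $M_1=C_n(e^M-1)$.

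When $h<1$ the remaining range $x_n\in[h,1]$ is handled by the trivial bound $u\le M\le(M/h)x_n$, at the cost of replacing $M_1$ by $\max(M_1,M/h)$. The main difficulty lies in step two: condition (iv) forces $\Psi$ to be strongly convex in $x'$, while (ii) and (iii) force it to descend sharply from $M$ down to $0$ within the short height $h$, and balancing all four competing constraints with essentially one free shape parameter is what pins down the quantitative dependence $M_1=M_1(M,n,p)$. It is precisely at $p=2$ that a polynomial $\Psi$ can no longer satisfy (iv) for large $M$, which is why the Hopf--Cole substitution is needed in that endpoint case.
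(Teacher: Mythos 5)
Your proof is correct, but it follows a genuinely different route from the paper's. The paper builds a \emph{single} barrier $V(x)=\log\bigl(1+e^{M}U(2(x-b))\bigr)$, where $U$ solves $-\Delta U=1$ on an annulus, and compares $u$ with $V$ directly on a spherical cap; the computation $-\Delta V=|\nabla V|^2+4e^{M-V}\ge|\nabla V|^2+2\ge|\nabla V|^p$ handles the whole range $1<p\le 2$ at once, giving $M_1=2c_2(n)e^M$. You instead pass to $v=M-u$ and construct an explicit polynomial subsolution of $-\Delta w+|\nabla w|^p=0$ on a thin cylinder for $1<p<2$, and then invoke the Hopf--Cole substitution separately at $p=2$. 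The trade-off is that your polynomial barrier yields a constant $M_1$ that is \emph{polynomial} in $M$ for fixed $p<2$ (rather than the paper's exponential $e^M$), which is sharper for large $M$; but the exponents degenerate as $p\to 2$ and the argument splits into two cases, whereas the paper's logarithmic barrier is really a Hopf--Cole construction in disguise that works uniformly for all $p\le 2$. One small caveat in your write-up: the formula $M_1\sim M^{1/(2-p)}$ should be read as $M_1\sim\max(1,M)^{1/(2-p)}$ (or $M_1\sim(1+M)^{1/(2-p)}$); for $M<1$ one keeps $R=1$ and a fixed constant $M_1=c(n,p)$, since taking $M_1\to 0$ with $M$ would make the barrier inequality fail. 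This is a presentational issue rather than a gap, and the rest of the verification (boundary comparison on the three faces of the cylinder, the bound $|\nabla\Psi|^2\le 4M^2/R^2+M_1^2$, the final patch on $x_n\in[h,1]$) is sound.
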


This follows from a barrier argument valid for $1<p\le 2$ (but not for $p>2$);
cf.~\cite[Lemma~VI.3.1]{LSU} and also \cite[Lemma 35.4]{QSb19}. We give a proof for convenience.

\begin{proof}
Let $U$ be the solution of 
$$ \left.\qquad{\alignedat2
 -\Delta U &= 1,    &\qquad& 1<|x|<3,\\
             U &= 0,  &\qquad& |x|=1, \\
             U &= 1,  &\qquad& |x|=3. \\
  \endalignedat}\qquad\right\} 
$$
By the maximum principle and the smoothness of $U$ we have, for some $c_2=c_2(n)>0$,
\be{SauxilBernsteinIneq}
0<U(x)<1 \quad\hbox{ and }\quad  U(x)\leq c_2(|x|-1),\qquad 1<|x|<3.
\ee
Let $b=(a',-\ts\frac12)$ and define the annulus $\mathcal{A}=\{x\in\R^n;\, \ts \frac12\leq|x-b|\leq \ts\frac32\}$,
whose inner ball is tangent to $\partial\R^n_+$ from outside at the point $a$,
and which satisfies $\mathcal{A}\cap\R^n_+\subset D_a$. Set
$$V(x)=\log\bigl(1+e^{M}U\big(2(x-b)\bigr),
\quad x\in \mathcal{A}.$$
We will compare $V$ with $u$ in the cap $\Omega=\mathcal{A}\cap\R^n_+=\mathcal{A}\cap D_a$. 
Since $e^V=1+e^{M}U\big(2(x-b)\big)\le 2e^{M}$, we obtain
$-\Delta V=|\nabla V|^2+4e^{-V}e^{M}\ge |\nabla V|^2+2\ge |\nabla V|^p$,
hence
$$ -\Delta V-|\nabla V|^p\ge 0\ge -\Delta u-|\nabla u|^p
\quad\hbox{in $\Omega$.}$$
We have 
$\partial\Omega=S_1\cup S_2$, with $S_1=\{x\in\R^n_+;\,|x-b|=\frac32\}$
and $S_2=\{x\in \R^n;\ |x-b|<\frac32,\ x_n=0\}$.
Since $V\ge M\geq {u}$ on $S_1$
and $V\ge 0=u$ on $S_2$,
it follows from the maximum principle that $V\ge u$ in $\Omega$.
In particular, using \eqref{SauxilBernsteinIneq} and $\log(1+s)\le s$ for $s>0$, we obtain
$$u(a',x_n)\leq V(a',x_n) \leq e^M U\big(0,2x_n+1\big)
\leq 2c_2e^M x_n
\quad 0<x_n\le 1,$$
which completes the proof.
\end{proof}

We shall also use the following modification of (a special case of) the doubling lemma in \cite[Lemma~5.1]{PQS}, which will enable us  to carry out an oscillation shrinking argument in the subsequent step.

\begin{lemma} \label{lemDoubling}
Let $M:\R^n\to[0,\infty)$ be bounded on compact sets
and let $k>0$ and $\rho>1$ be real numbers. If $y\in \R^n$ is such that $M(y)>0$,
then there exists $x\in \R^n$ such that
\begin{equation}
M(x)\geq M(y),
                \label{DoublingB}\end{equation}
and
$$M(z)\le \rho M(x)\quad\hbox{ for all $z\in \overline
B\bigl(x,\frac{k}{M(x)}\bigr)$}.$$
\end{lemma}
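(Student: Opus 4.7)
My plan is to adapt the original doubling lemma argument of \cite{PQS}, replacing the factor $2$ by the general ratio $\rho>1$. The proof is by contradiction and iterative construction of a sequence $(x_j)_{j\ge 0}\subset\R^n$, starting from $x_0=y$, along which $M$ grows at least geometrically with ratio~$\rho$, while successive jumps shrink fast enough to be summable.

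More precisely, suppose no point $x$ satisfies both \rife{DoublingB} and the ball inequality. Setting $x_0=y$, I claim one can build $(x_j)$ inductively so that $M(x_{j+1})>\rho M(x_j)$ and $|x_{j+1}-x_j|\le k/M(x_j)$. Indeed, once $x_j$ is chosen with $M(x_j)\ge M(y)>0$, the negation of the desired conclusion applied to $x_j$ produces some $z\in \overline B(x_j,k/M(x_j))$ with $M(z)>\rho M(x_j)\ge M(y)$; take $x_{j+1}:=z$. The induction never stops, and a trivial recursion yields $M(x_j)\ge \rho^j M(y)$ for all $j\ge 0$, hence in particular $M(x_j)>0$.

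To close the argument, combine the two bounds to get
$$|x_{j+1}-x_j|\le \frac{k}{M(x_j)}\le \frac{k}{M(y)}\,\rho^{-j}.$$
Summing the geometric series gives $\sum_{j\ge 0}|x_{j+1}-x_j|\le \frac{k}{M(y)}\cdot\frac{\rho}{\rho-1}<\infty$, so $(x_j)$ is Cauchy, hence bounded, contained in some compact set $K\subset\R^n$. Since $M$ is bounded on compact sets, $\sup_K M<\infty$, which contradicts $M(x_j)\ge \rho^j M(y)\to\infty$.

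I do not expect any real obstacle here: the statement is a routine variant of the classical doubling lemma and the only care needed is to verify that the contradiction hypothesis can be iterated indefinitely (using $M(x_j)>0$ at every step) and that the chosen ratio $\rho>1$ makes the displacements summable, which it does precisely because $\rho>1$. The condition $\rho>1$ (rather than $\rho\ge 1$) is exactly what this argument requires.
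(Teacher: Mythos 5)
Your proof is correct and follows essentially the same route as the paper: the same inductive construction of a sequence with geometric growth of $M$ and geometrically shrinking displacements, the same use of the hypothesis $M(x_j)\ge M(y)$ to keep the induction going, and the same conclusion via the Cauchy/compactness contradiction. No discrepancies to report.
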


\begin{proof}
Assume that the Lemma is not true. 
We claim that there exists  
a sequence $(x_j)$ in $\R^n$
such that, for all $j\in\N$,
\be{MinductB}
 M(x_{j+1})>\rho M(x_j)           
\quad\hbox{and}\quad 
|x_j-x_{j+1}|\leq \ts\frac{k}{M(x_j)}.           
\ee
We choose $x_0=y$. By our contradiction assumption,
there exists $x_1\in \R^n$ such that
$$M(x_1)>\rho M(x_0)
\quad\hbox{and}\quad 
|x_0-x_1|\leq \ts\frac{k}{M(x_0)}.$$
Fix some $i\geq 1$ and assume that we have already constructed 
$x_0,\cdots,x_i$
such that \eqref{MinductB} holds for $j=0,\cdots,i-1$.
Since $M(x_i)\ge M(y)$, our contradiction assumption
implies  that there exists $x_{i+1}\in \R^n$ such that
$$M(x_{i+1})>\rho M(x_i)\quad\hbox{and}\quad 
|x_i-x_{i+1}|\leq \ts\frac{k}{M(x_i)}.$$
We have thus proved the claim by induction.

Now, we have
\begin{equation}
 M(x_i)\geq \rho^iM(x_0)\quad\hbox{and}\quad 
|x_i-x_{i+1}|\leq \rho^{-i}\ts\frac{k}{M(x_0)},\qquad i\in\N.
                 \label{Mdiverge}\end{equation}
In particular, $(x_i)$ is a Cauchy sequence,
hence it converges to some $a\in \R^n$
and $K:=\{x_i;\ i\in\N\}\cup\{a\}$ is thus a compact subset of $\R^n$.
Since $M(x_i)\to\infty$ as $i\to\infty$ by \eqref{Mdiverge},
this contradicts the assumption that $M$ is bounded on compact sets. The Lemma is proved. \end{proof}

The key step
 is the following lemma, which shows the global boundedness of $u$.
 Its proof is based on a rescaling procedure, combined with an oscillation shrinking argument
 (using Lemma~\ref{lemDoubling}) and a boundary estimate (from Lemma~\ref{barrier}), which force the rescaled solution to achieve an interior maximum,
 leading to a contradiction with the strong maximum principle.

\begin{lemma}\label{barrier2}
Under the assumptions of Theorem~\ref{thmclassif} {with $1<p<2$}, 
we have $\sup_{\R^n_+}|u|<\infty$.
\end{lemma}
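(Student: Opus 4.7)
Proof plan. The plan is to argue by contradiction, assuming $\sup_{\R^n_+}|u|=\infty$. By Lemma~\ref{bernstein2}, any sequence $y_k$ with $|u(y_k)|\to\infty$ necessarily has bounded $(y_k)_n$. After replacing $u$ by $-u$ if needed (and handling the resulting subharmonic equation $\Delta w=|\nabla w|^p$ by the symmetric argument), I assume $\sup u=+\infty$ and fix $y_k$ with $u(y_k)\to+\infty$. The key observation is that \eqref{ellHJ} is invariant under the scaling $v(y):=d^{\alpha}u(x_0+dy)$ with $\alpha:=(2-p)/(p-1)>0$, so the natural quantity for the doubling argument is $M(x):=(\max(u(x),0))^{\gamma}$, with $\gamma:=1/\alpha=(p-1)/(2-p)$, extended by zero outside $\R^n_+$. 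Choosing $\rho>1$ close to $1$ and $k\ge 1$ large (both to be specified), and starting the doubling from $y_k$, Lemma~\ref{lemDoubling} produces points $x_j\in\R^n_+$ with $u(x_j)\to+\infty$ such that $u(z)\le\rho^{\alpha}u(x_j)$ for all $z\in B(x_j,kd_j)$, where $d_j:=u(x_j)^{-\gamma}\to 0$.

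The rescaled function $v_j(y):=u(x_j+d_jy)/u(x_j)$ then solves $-\Delta v_j=|\nabla v_j|^p$ on the shifted half-space $\Omega_j:=\{y_n>-h_j\}$, where $h_j:=(x_j)_n/d_j\ge 0$; it satisfies $v_j(0)=1$, $v_j=0$ on $\partial\Omega_j$, $v_j\le\rho^{\alpha}$ on $B(0,k)\cap\Omega_j$, and (from Lemma~\ref{bernstein}) the gradient bound $|\nabla v_j(y)|\le K(h_j+y_n)^{-\beta}$. Interior elliptic regularity yields precompactness of $\{v_j\}$, and along a subsequence $v_j\to v_\infty$ locally uniformly together with derivatives on a limiting domain $\Omega_\infty$, which is $\R^n$ if $h_j\to\infty$ and the shifted half-space $\{y_n>-h_\infty\}$ with $h_\infty\in[0,\infty)$ otherwise. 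The ``deep interior'' case $\Omega_\infty=\R^n$ is ruled out by Lions' Liouville theorem \cite{Lions85}: it would force $v_\infty\equiv v_\infty(0)=1$, hence $|\nabla v_j|\to 0$ on larger and larger rescaled balls, and a diagonal extraction (using that $(x_j)_n$ is bounded and $d_j\to 0$) should produce a subsequence along which $h_j$ stays bounded, reducing to the boundary case.

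In the boundary case $\Omega_\infty=\{y_n>-h_\infty\}$ with $h_\infty<\infty$, the limit $v_\infty$ is a classical solution of \eqref{pb} on this shifted half-space, with $v_\infty(0)=1$, $v_\infty=0$ on $\partial\Omega_\infty$, and $v_\infty\le\rho^{\alpha}$ on $B(0,k)\cap\overline{\Omega_\infty}$. Choosing $k\ge h_\infty+2$, Lemma~\ref{barrier} applied to $v_\infty$ at the boundary point $(0,-h_\infty)$ yields the linear bound $v_\infty(0,y_n)\le M_1(y_n+h_\infty)$ for $-h_\infty<y_n\le 1-h_\infty$ (with $M_1=M_1(\rho^{\alpha},n,p)$); evaluating at $y=0$ forces $h_\infty\ge 1/M_1>0$. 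The main obstacle is closing the argument. The idea is that taking $\rho$ sufficiently close to $1$ (the ``oscillation shrinking'') pinches the range of $v_\infty$ on $B(0,k)$ near the value $1$, and combined with the linear boundary bound above this forces the supremum of $v_\infty$ over $\overline{B(0,k)}\cap\overline{\Omega_\infty}$ to be attained at an interior point. The strong maximum principle, applied to the subharmonic comparison function $\rho^{\alpha}-v_\infty$ (which satisfies $\Delta(\rho^{\alpha}-v_\infty)=|\nabla v_\infty|^p\ge 0$), would then yield a contradiction with the boundary condition $v_\infty=0$ on $\partial\Omega_\infty$.
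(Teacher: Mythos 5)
Your overall strategy (doubling lemma, rescaling, barrier estimate, strong maximum principle) matches the paper's, but there are genuine gaps in the execution.

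The most serious one is that you keep the doubling parameters $\rho>1$ and $k$ \emph{fixed} while passing to the limit in $j$. The resulting limit $v_\infty$ then satisfies $v_\infty(0)=1$ and $v_\infty\le\rho^\alpha$ only on $B(0,k)\cap\Omega_\infty$, with $\rho^\alpha>1$. The maximum of $v_\infty$ over $\overline{B(0,k)\cap\Omega_\infty}$ may perfectly well be attained on the outer sphere $|y|=k$, not at $0$, and the strong maximum principle yields nothing. Your proposed remedy of ``pinching the range near $1$'' by taking $\rho$ close to $1$ does not close this, because you still have $v_\infty(0)=1<\rho^\alpha$; you need $v_\infty\le 1$ \emph{globally} to know $0$ is an interior extremum. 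The paper resolves this by letting the doubling parameters vary with the index: it applies Lemma~\ref{lemDoubling} with $\rho=2^{1/k}\to 1$ and ball radius $k/M(x^k)$ with $k\to\infty$ \emph{simultaneously}, so that the limit satisfies $|v|\le 1$ on the whole limiting domain while $|v(0)|=1$, forcing an interior maximum or minimum at $0$ and hence $v\equiv\pm1$, contradicting the zero boundary condition.

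Two further issues. First, your treatment of the ``deep interior'' case $h_j\to\infty$ is not rigorous: there is no reason a ``diagonal extraction'' should yield a subsequence with $h_j$ bounded (if $h_j\to\infty$, every subsequence diverges). The correct move is to apply the growth estimate \eqref{estu} from Lemma~\ref{bernstein2} directly to the doubled points $x_j$: since $u(x_j)\to\infty$, \eqref{estu} forces $(x_j)_n\to 0$, and then $h_j=(x_j)_n\,u(x_j)^{1/(\beta-1)}\lesssim (x_j)_n\cdot(x_j)_n^{-1}$ is bounded a priori. The interior case simply never occurs. Second, you apply Lemma~\ref{barrier} to the \emph{limit} $v_\infty$ and choose $k\ge h_\infty+2$; this is circular (the doubling output, hence $h_\infty$, depends on $k$), and moreover $v_\infty\in C(\overline{\Omega_\infty})$ is something you must first prove, not assume. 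The paper applies Lemma~\ref{barrier} to each $v_j$ \emph{before} taking limits, producing a uniform linear bound \eqref{estimbarrier} which then both bounds $h_j$ away from $0$ and establishes the continuity of $v_\infty$ up to the boundary.
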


\begin{proof} 
In this proof $C$ denotes a generic positive constant possibly depending on $u$.
Assume for contradiction that  $\sup_{\R^n_+}|u|=\infty$.
Then there exists a sequence $y^k\in \R^n_+$ such that $\lim_{k\to\infty}  |u(y^k)|=\infty$.
{Note that $\beta>1$ and} let $M(x)=|u(x)|^{1/(\beta-1)}$, extended by $0$ in $\R^n\setminus \overline{\R^n_+}$.
For every integer $k\ge 1$, applying Lemma~\ref{lemDoubling} with $\rho=2^{1/k}$,
there exists a point $x^k\in \R^n_+$ such that $M(x^k)\ge M(y^k)$ and
\be{doublingk}
M(x)\le 2^{1/k} M(x^k)\quad\hbox{ for all $x\in \R^n$ such that $|x-x^k|\le \ts\frac{k}{M(x^k)}$.}
\ee
We rescale $u$ by setting:
$$v_k(y)=\lambda_k^{\beta-1}u(x^k+\lambda_k y),\quad y\in\tilde D_k,$$
where 
$$\lambda_k:=\ts\frac{1}{M(x^k)}=|u(x^k)|^{-1/(\beta-1)},\quad \eta_k:=\lambda_k^{-1}x^k_n,$$
$$\tilde D_k:=\{y\in\R^n,\  y_n>-\eta_k\},\quad
\tilde \Sigma_k:=\{y\in\R^n,\  y_n=-\eta_k\}.$$
The function $v_k$ solves
$$\begin{cases}
-\Delta v_k = |\nabla v_k|^p,  & \hbox{$y\in\tilde D_k$,}
\\
v_k=0, & \hbox{$y\in \Sigma_k$}
\end{cases}
$$
and
\be{normalizek}
|v_k(0)|=1.
\ee
Moreover, by \eqref{doublingk}, we get
\be{doublingk2}
|v_k(y)|\le 2^{\frac{\beta-1}{k}},\quad y\in\tilde D_k\cap B_k(0).
\ee
Since $\lim_{k\to\infty}  |u(x^k)|=\infty$ and $|u(x^k)|\le C(1+(x_n^k)^{1-\beta})$
owing to \eqref{estu}, it follows that $\lim x^k_n=0$. Therefore,
$$\eta_k=|u(x^k)|^{1/(\beta-1)}x^k_n\le C(1+x^k_n)\le C.$$
By passing to a subsequence we may assume that $\eta_k\to L\in[0,\infty)$.

Now applying Lemma~\ref{barrier} {to $v_k$ and $-v_k$, we deduce that} there exist $M_1>0$ and $k_0\ge 1$ such that, for all $k\ge k_0$,
\be{estimbarrier}
|v_k(y',-\eta_k+s)|\le M_1s,\quad |y'|<k/2,\ 0{\strut\le\strut}s<1.
\ee
It follows in particular that $L>0$. Indeed, if we had $L=0$, then taking $s=\eta_k$ in \eqref{estimbarrier} with $k$ large and using \rife{normalizek}
would lead to $1\leq M_1\eta_k\to 0$ as $k\to \infty$: a contradiction.
In addition, by Lemma \ref{bernstein}, we have 
\be{estimgradvk}
|\nabla v_k(y',-\eta_k+s)|\le Ks^{-\beta},
\quad y'\in \R^{n-1},\ s>0.
\ee
Let $D_\infty:=\{y\in\R^n,\  y_n>-L\}$.
By \eqref{estimbarrier}, \eqref{estimgradvk} and interior elliptic estimates, passing to a subsequence, it follows that $v_k$ converges in $C^2_{loc}(D_\infty)$
to a solution $v$ of $-\Delta v = |\nabla v|^p$ in $D_\infty$.
Moreover, \eqref{estimbarrier} guarantees that $|v(y',-L+s)|\le M_1s$ for all $y'\in \R^{n-1}$ and $s\in(0,1)$,
hence $v\in C(\overline{D_\infty})$ with 
\be{BCv}
v=0 \quad\hbox{on $\partial D_\infty$.}
\ee
Also, by \eqref{normalizek}, \eqref{doublingk2}, we have $|v|\le 1$ in $D_\infty$
and $|v(0)|=1$.
Since $L>0$, we have that $0$ is an interior maximum or minimum point of $v$ in $D_\infty$.
It then follows from the strong maximum principle that $v\equiv \pm 1$ in $\overline{D_\infty}$:
a contradiction with \eqref{BCv}.
\end{proof}

\begin{proof} [Proof of Theorem~\ref{thmclassif} {for $1<p<2$}]
In view of Lemma~\ref{barrier2}, this is a consequence of 
\cite[Theorem~4.1]{PV} applied to $M-u\ge 0$ where $M=\sup_{\R^n_+}u$.

Alternatively, we can provide a different, self-contained proof as follows.
By combining Lemma~\ref{barrier}, {applied to $u$ and $-u$,} and Lemma~\ref{barrier2},
we obtain a uniform Lipschitz control of $u$ near the boundary, namely:
\be{boundvM1}
|{ u}(x',x_n)|\le M_1 x_n,\quad x'\in \R^n_+,\ 0<x_n\le 1,
\ee
for some $M_1>0$.
This allows us to repeat the argument from \cite{FPS2020}, used there to show the analogous result for $p>2$.
We give the details for convenience.

Write $x=(\tilde x,y)\in \mathbb R^{n-1}\times  [0,\infty)$ and
fix any $h\in \mathbb R^{n-1}\setminus\{0\}$.  Let
$$v(\tilde x,y)=u(\tilde x+h,y)-u(\tilde x,y),\qquad (\tilde x,y)\in \mathbb R^{n-1}\times [0,\infty).$$
It suffices to show that $v\equiv 0$.
By Lemma~\ref{barrier2} we know that $v$ is bounded.
Assume for contradiction that
$\sigma:=\sup_{\mathbb R^n_+} v>0$
(the case $\inf_{\mathbb R^n_+} v<0$ is similar). 
By Lemma \ref{bernstein}, we have
$$|v(\tilde x,y)|\le C(n,p)|h|y^{-\beta},\quad\hbox{for all $(\tilde x,y)\in \mathbb R^{n-1}\times (0,\infty)$.}$$
This combined with \eqref{boundvM1} provides some large $A>1$ and small $\eps\in(0,1)$ such that $|v|\le \sigma/2$ 
in $\{y\ge A\}\cup\{y\le \eps\}$.
Therefore
\be{eqContrad2}
\sigma=\sup_{\mathbb R^{n-1}\times (\eps,A)} v.
\ee
Pick a sequence $(\tilde x_j,y_j)\in \mathbb R^{n-1}\times (\eps,A)$ 
such that $v(\tilde x_j,y_j)\to\sigma$.

\smallskip
Next define
$$u_j(\tilde x, y)=u(\tilde x_j+\tilde x,y),\qquad (\tilde x,y)\in \mathbb R^{n-1}\times [0,\infty),$$
and note that
\be{supduj}
\sup_{(\tilde x, y)\in\mathbb R^n_+} \bigl(u_j(\tilde x+h, y)-u_j(\tilde x, y)\bigr)=\sup_{\mathbb R^n_+} v=\sigma
\ee
and
\be{limsigma}
u_j(h,y_j)-u_j(0,y_j) =v(\tilde x_j,y_j) \to \sigma,\quad\mbox{ as }j\to\infty.
\ee
By Lemma \ref{bernstein} and \eqref{estu}, we have
$$|\nabla u_j(\tilde x,y)|\le Ky^{-\beta}, \quad\hbox{for all $(\tilde x,y)\in \mathbb R^{n-1}\times (0,\infty)$.}$$
and
\be{Bernsteiny_uj}
|u_j(\tilde x,y)|\le C(1+y^{1-\beta}),\quad\hbox{for all $(\tilde x,y)\in \mathbb R^{n-1}\times [0,\infty)$}
\ee
for all $j$.
It then follows from interior elliptic estimates that $(u_j)_j$ is relatively compact in $C^2_{\mathrm{loc}}(\mathbb R^n_+)$.
Therefore, some subsequence of $(u_j)_j$ converges in that topology to a solution $U\in C^2(\mathbb R^n_+)$ of
$-\Delta U=|\nabla U|^p$.
Moreover, we may assume that $y_j\to y_\infty\in [\eps,A]$ and we get
\be{supU}
U(h,y_\infty)-U(0,y_\infty)= \sigma,
\ee
owing to \eqref{limsigma}. 

\smallskip
 Put now
$$V(\tilde x,y)=U(\tilde x+h,y)-U(\tilde x,y),\qquad (\tilde x,y)\in \mathbb R^{n-1}\times [0,\infty).$$
It follows from \eqref{supduj} and \eqref{supU} that $\sigma=\sup_{\mathbb R^n_+} V=V(0,y_\infty)$.
But $V$ satisfies
$$-\Delta V=A(\tilde x,y)\cdot\nabla V,
\quad\hbox{where }
A(\tilde x,y):=\int_0^1 G\bigl(s\nabla U(\tilde x+h,y)+(1-s)\nabla U(\tilde x,y)\bigr)\,ds,
$$
with $G(\xi)= p|\xi|^{p-2}\xi$, and $A$ is bounded on compact subsets of $\mathbb R^n_+$.
This contradicts the strong maximum principle and completes the proof.
\end{proof}

 We note that the above proof cannot be directly extended to the remaining case $p=2$,
 mostly because \eqref{bernsteinK} no longer yields estimates \eqref{ell}-\eqref{estu} when $p=2$.
However, by means of the Hopf-Cole transformation and Lemma~\ref{barrier}, this case can be reduced to known properties of 
positive harmonic functions in a half-space. 

\begin{proof} [Proof of Theorem~\ref{thmclassif} for $p=2$]
The function $w:=e^u-1$ is harmonic in $\R^n_+$, with $w=0$ on $\partial \R^n_+$.
Since $w\ge -1$, we deduce from Lemma~\ref{barrier} applied to $-w$ that there exists $k>0$ such that  $w\ge -kx_n$ in 
$\R^{n-1}\times(0,1]$.
Since also $w\ge -1\ge -x_n$ in $\R^{n-1}\times(1,\infty)$, it follows that 
$z:=w+(k+1)x_n$ is harmonic and positive in $\R^n_+$, with $z=0$ on $\partial \R^n_+$.
It is known that any such function is necessarily of the form $z=cx_n$ for some $c>0$
(cf.~\cite[Theorem~1.7.3 and Exercise~1.17]{AG}, and see also 
\cite[Remark~5.1(ii)]{MSS} for a more elementary proof). 
Consequently, $e^u=1+w=1+(c-k-1)x_n$, which proves Theorem~\ref{thmclassif} for $p=2$.
\end{proof}

{\bf Acknowledgement.}\quad A. Porretta was supported by  the Excellence Project MatMod@TOV of the Department of Mathematics of the University of Rome Tor Vergata, by the Italian (EU Next Gen) PRIN project 2022W58BJ5 ({\it PDEs and optimal control methods in mean field games, population dynamics and multi-agent models}, CUP E53D23005910006), and by GNAMPA research group of Indam. He also wishes to thank the hospitality of University of Paris Nord, where this research was initiated. 

\end{document}